\theoremstyle{plain}
\newtheorem{theorem}{Theorem}[section]
\newtheorem{corollary}[theorem]{Corollary}
\newtheorem{lemma}[theorem]{Lemma}
\newtheorem{proposition}[theorem]{Proposition}
\theoremstyle{definition}
\newtheorem{example}[theorem]{Example}
\newtheorem{remark}[theorem]{Remark}
\DeclareMathOperator{\Hom}{Hom}
\DeclareMathOperator{\Ann}{Ann}
\DeclareMathOperator{\Ass}{Ass}
\DeclareMathOperator{\ir}{ir}
\DeclareMathOperator{\Card}{Card}
\DeclareMathOperator{\Soc}{Soc}
\DeclareMathOperator{\Rad}{Rad}
\DeclareMathOperator{\Att}{Att}
\DeclareMathOperator{\Spec}{Spec}
\DeclareMathOperator{\p}{\frak p}
\newcommand{\fkp}{\mathfrak{p}}
\newcommand{\fkm}{\mathfrak{m}}
\newcommand{\rmr}{\mathrm{r}}
\DeclareMathOperator{\m}{\frak m}
\begin{document}
\large
\centerline{\Large {\bf REDUCIBILITY INDEX AND SUM-REDUCIBILITY INDEX}}
\vskip 1 cm
\centerline { TRAN NGUYEN AN}
\centerline{College of Education, Thai Nguyen University, Vietnam}
\centerline{E-mail: antrannguyen@gmail.com}
\vskip 0.2 cm
\centerline { TRAN DUC DUNG}
\centerline{College of Science, Thai Nguyen University, Vietnam}
\centerline{ E-mail: ducdungdhkhtn88@gmail.com}
\vskip 0.2 cm
\centerline {SHINYA KUMASHIRO}
\centerline{Department of Mathematics and Informatics, Graduate School of Science and Engineering}
\centerline{Chiba University, Yayoi-cho 1-33, Inage-ku, Chiba, 263-8522, Japan}
\centerline{E-mail: axwa4903@chiba-u.jp}
\vskip 0.2 cm
\centerline { LE THANH NHAN}
\centerline{College of Science, Thai Nguyen University, Vietnam}
\centerline{E-mail: nhanlt2014@gmail.com}
\vskip 1 cm

\noindent{\bf Abstract} {\footnote{ {\it{Key words and phrases: }}  reducibility index, flat base change,  sum-reducibility index,  Matlis duality. \hfill\break
  {\it{2000 Subject  Classification: }} 13A15, 13H10.\hfill\break {The work is supported by the Vietnam National Foundation for Science and Technology Development (Nafosted) under grant number 101.04-2017.309. The third author was supported by JSPS KAKENHI Grant Number JP19J10579.}}. 
{\normalsize
Let $R$ be a Noetherian ring. For a finitely generated $R$-module $M$,  Northcott introduced the reducibility index of $M$, which is the number of submodules appearing in an irredundant irreducible decomposition of the submodule $0$ in $M$. 
On the other hand, for an Artinian $R$-module $A$,  Macdonald proved that the number of sum-irreducible submodules appearing in an irredundant sum-irreducible representation of $A$ does not depend on the choice of the representation. This number is called the sum-reducibility index of $A$. In the former part of this paper, we compute the reducibility index of $S\otimes_R M$, where $R\to S$ is a flat homomorphism of Noetherian rings. Especially, the localization, the polynomial extension, and the completion of $R$ are studied.
For the latter part of this paper, we clarify the relation among the reducibility index of $M$,  that  of the completion of $M$, and the sum-reducibility index of the Matlis dual of $M$.
}

 \section{Introduction}


The purpose of this paper is to study about the reducibility index and sum-reducibility index. 

Let $R$ be a commutative Noetherian ring and  $M$  a finitely generated $R$-module. Let $N$ be a proper submodule of $M$.
As a fundamental result, $N$ can be expressed as an intersection of  finitely many irreducible submodules of $M$, and the number of irreducible submodules appearing in  an irredundant irreducible decomposition of $N$ is independent of the choice of decomposition (see E. Noether \cite{Noe} for the case where $M=R$). 
The number is called the  {\it  reducibility index} of $N$ in $M$ (see \cite{Nor}) and denoted by $\ir_{M}(N)$.

The study of reducibility index of finitely generated modules  has  attracted the interest of  a number of researchers.   In this topic,  they mainly pay attention to the relationship between the structure of $M$ and properties of reducibility index ${\rm ir}_M(\frak qM)$, where $R$ is local and  $\frak q$ runs over the parameter ideals of $M$. For example,  D. G.  Northcott \cite[Theorem 3]{Nor} proved that if $M$ is Cohen-Macaulay, then ${\rm ir}_M(\frak qM)$ is a constant for all  parameter ideals $\mathfrak q$ of $M$; S. Goto and   N. Suzuki  \cite{GSu} showed that  if  $M$ is generalized Cohen-Macaulay then there exists a constant $c$ such that  ${\rm ir}_M(\frak q M) \leq c$  for all parameter ideals $\frak q$ of $M$ (see also \cite{CT}). Some uniform bounds for reducibility index are given for the case where $M$ is sequentially Cohen-Macaulay and $M$ is sequentially generalized Cohen-Macaulay (see  \cite{T}, \cite{Q1}). Further extensions  are presented in \cite{Q2},  \cite{DN}.   

However, until now,  it looks that no one knows about the reducibility index under flat base changes. 
Especially, assuming that $R$ is a local ring with the maximal ideal $\fkm$, the relationship between $\ir_M(N)$ and $\ir_{\widehat M}(\widehat  N)$ has not been clarified, where $\widehat *$ denotes the $\m$-adic completion of $R$.  

The first main result of this paper reveals the reducibility index under flat base changes. Although we have formulated our result in more generality, we will restrict ourselves to studying only the case where $N=0$ since $\ir_M (N)=\ir_{M/N}(0)$ by definition. With this reason, in this paper, we  denote by $\ir_R (M)$ the reducibility index $\ir_M (0)$ of $0$ in $M$. For each $\frak p\in\Ass_R (M)$, let $ \mu_0(\p, M)$ denote the dimension of the socle of $M_{\frak p}$.
The first purpose of this paper is now stated as follows.

\begin{theorem}\label{T:1} Let $\varphi:R \to S$ be a flat homomorphism of Noetherian rings. Then
$$\ir_S (S\otimes_R M)=\sum_{\fkp\in \Ass_R (M)} \ir_S(S/\fkp S){\cdot}\mu_0(\fkp, M).$$
In addition, if $\varphi$ is faithfully flat, then 
$$ \ir_R(M)\leq \ir_S(S\otimes_R M)\leq t{\cdot}\ir_R(M),$$
where  $t:=\underset{\p \in \Ass_R(M)}{\max}  \ir (S/\p S)$. The equality  $ \ir_R(M)=\ir_S(S\otimes_R M)$ holds true if and only if $\frak p S$ is irreducible in $S$ for all $\frak p\in\Ass_R (M)$.
\end{theorem}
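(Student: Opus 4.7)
The plan is to reduce the theorem to the classical socle formula
\[
\ir_R(M) = \sum_{\fkp\in \Ass_R(M)} \mu_0(\fkp,M),
\]
valid for every finitely generated module $M$ over a Noetherian ring (each associated prime $\fkp$ contributes exactly $\dim_{k(\fkp)}\Soc(M_\fkp)$ irreducible components to any irredundant irreducible decomposition of $0$). Applying this formula to $S \otimes_R M$, together with the standard flat base-change description
\[
\Ass_S(S \otimes_R M) = \bigcup_{\fkp \in \Ass_R(M)} \Ass_S(S/\fkp S),
\]
reduces the first identity of the theorem to the pointwise claim that for every $\fkp \in \Ass_R(M)$ and every $\fkq \in \Ass_S(S/\fkp S)$ (which necessarily contracts to $\fkp$),
\[
\mu_0(\fkq,\, S \otimes_R M) = \mu_0(\fkp, M) \cdot \mu_0(\fkq,\, S/\fkp S).
\]
Indeed, summing over $\fkq$ produces $\mu_0(\fkp,M)\cdot \ir_S(S/\fkp S)$ by the socle formula applied to $S/\fkp S$, and then summing over $\fkp$ gives the stated formula.

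To prove this pointwise identity, localize at $\fkq$, so that $(S \otimes_R M)_\fkq = S_\fkq \otimes_{R_\fkp} M_\fkp$, and exploit flatness of $R_\fkp \to S_\fkq$ together with finite presentation of $k(\fkp)$ over the Noetherian ring $R_\fkp$. The $\Hom$ base-change isomorphism yields
\[
\Hom_{S_\fkq}\!\bigl(S_\fkq/\fkp S_\fkq,\, (S \otimes_R M)_\fkq\bigr) \cong \Hom_{R_\fkp}(k(\fkp), M_\fkp) \otimes_{R_\fkp} S_\fkq \cong (S_\fkq/\fkp S_\fkq)^{\mu_0(\fkp, M)}.
\]
Since $\fkp S_\fkq \subseteq \fkq S_\fkq$, every socle element of $(S \otimes_R M)_\fkq$ is annihilated by $\fkp S_\fkq$, so the full socle is extracted from the module above and is isomorphic to $\Soc(S_\fkq/\fkp S_\fkq)^{\mu_0(\fkp, M)}$. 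Taking $k(\fkq)$-dimensions gives exactly the required identity. I expect this base-change and socle extraction step to be the main technical obstacle; once it is in place, the rest is bookkeeping.

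For the second assertion, faithful flatness of $\varphi$ guarantees $S/\fkp S \neq 0$ and hence $\ir_S(S/\fkp S) \geq 1$ for every $\fkp \in \Ass_R(M)$. Substituting $1 \leq \ir_S(S/\fkp S) \leq t$ into the main formula yields
\[
\ir_R(M) \leq \ir_S(S \otimes_R M) \leq t \cdot \ir_R(M).
\]
Equality on the left forces $\ir_S(S/\fkp S)=1$ for every $\fkp \in \Ass_R(M)$; since $\ir_S(S/\fkp S)=1$ is the assertion that $0$ is irreducible in $S/\fkp S$, i.e.\ that $\fkp S$ is irreducible in $S$, this is precisely the stated characterization.
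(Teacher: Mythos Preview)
Your proposal is correct and follows essentially the same route as the paper: apply the Bass-number formula $\ir_R(M)=\sum_{\fkp\in\Ass_R(M)}\mu_0(\fkp,M)$ to $S\otimes_R M$, split $\Ass_S(S\otimes_R M)$ as the disjoint union $\bigcup_{\fkp}\Ass_S(S/\fkp S)$ via flat base change, and establish the pointwise identity $\mu_0(\fkQ,S\otimes_R M)=\mu_0(\fkp,M)\cdot\mu_0(\fkQ,S/\fkp S)$ by a Hom--tensor base-change computation, after which the inequalities and equality criterion follow exactly as you indicate. The only cosmetic difference is in the organization of that key computation: you first identify the $\fkp$-torsion of $(S\otimes_R M)_\fkQ$ as $(S_\fkQ/\fkp S_\fkQ)^{\mu_0(\fkp,M)}$ and then extract the socle, whereas the paper writes out a longer chain of $\Hom$-isomorphisms (localization, adjunction, flat base change of $\Hom$) aimed directly at $\Hom_{S_\fkQ}(S_\fkQ/\fkQ S_\fkQ,\,-)$.
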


The second purpose of this paper is to study the sum-reducibility index  of Artinian modules. A non-zero Artinian $R$-module is said to be  {\it sum-irreducible} if it  can not be written as a sum of its two proper submodules. Let $A$ be an nonzero  Artinian $R$-module. Following I. G. Macdonald \cite{Mac}, $A$   can be  expressed as a sum of  finitely many sum-irreducible submodules of $A$, and the number of sum-irreducible submodules appearing in  an irredundant sum-irreducible representation of $A$ is independent of the choice of representation. The number is called the {\it sum-reducibility index} of $A$ and denoted by $\ir'_R(A)$.

Suppose that  $R$ is a local ring with the maximal ideal $\fkm$. Let $\widehat *$ denote  the $\m$-adic completion and $D_R(*)$ denote the Matlis dual functor. Our next problem is the relation between  $\ir_R(M)$ and $\ir'_R(D(M))$.
Let us note that, although the  reducibility index of finitely generated modules may change via the completion (Example \ref{E:2}), the sum-reducibility index of Artinian modules preserves (see Lemma \ref{L:3d}).

The following theorem gives the relation between  $\ir_R(M)$, $\ir_{\widehat R}(\widehat M)$, and $\ir'_R(D(M))$, which is the second main result of this paper.
 
  \begin {theorem} \label{T:2}  Let $(R,\m)$ be a Noetherian local ring. Then
 $$\ir_R(M)\leq \ir'_{R}(D(M))=\ir_{\widehat R}(\widehat M).$$ 
\end{theorem}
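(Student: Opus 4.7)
The statement splits into two independent parts, an inequality $\ir_R(M)\leq \ir_{\widehat R}(\widehat M)$ and an equality $\ir'_R(D(M))=\ir_{\widehat R}(\widehat M)$, which I would prove in turn.

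The inequality is an immediate consequence of Theorem~\ref{T:1}: since the canonical completion map $R\to\widehat R$ is faithfully flat, applying that theorem with $S=\widehat R$ yields $\ir_R(M)\leq \ir_{\widehat R}(\widehat R\otimes_R M)=\ir_{\widehat R}(\widehat M)$ with no further work. The refined formula in Theorem~\ref{T:1} even identifies the defect: equality holds if and only if $\mathfrak p\widehat R$ is irreducible in $\widehat R$ for every $\mathfrak p\in\Ass_R M$.

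For the equality, the plan is to apply Matlis duality. First observe that $D(M)=\Hom_R(M,E_R(R/\mathfrak m))$ coincides naturally with $D_{\widehat R}(\widehat M)$, so one may work throughout over the complete local ring $\widehat R$, where Matlis duality is a perfect contravariant duality between finitely generated modules and Artinian modules. This produces an inclusion-reversing bijection $\widetilde N\longleftrightarrow \widetilde N^{\circ}:=D(\widehat M/\widetilde N)$ between submodules of $\widehat M$ and submodules of $D(M)$. Exactness of $D$ together with the embedding $\widehat M/(\widetilde N_1\cap \widetilde N_2)\hookrightarrow \widehat M/\widetilde N_1\oplus \widehat M/\widetilde N_2$ and the dual argument produce the lattice identities $(\widetilde N_1\cap \widetilde N_2)^{\circ}=\widetilde N_1^{\circ}+\widetilde N_2^{\circ}$ and $(\widetilde N_1+\widetilde N_2)^{\circ}=\widetilde N_1^{\circ}\cap \widetilde N_2^{\circ}$, so the bijection interchanges intersections and sums. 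It follows that $\widetilde N$ is irreducible in $\widehat M$ if and only if $\widetilde N^{\circ}$ is sum-irreducible in $D(M)$, and an irredundant irreducible decomposition $0=\widetilde N_1\cap\dots\cap \widetilde N_r$ of $\widehat M$ translates into an irredundant sum-irreducible representation $D(M)=\widetilde N_1^{\circ}+\dots+\widetilde N_r^{\circ}$; counting components gives $\ir_{\widehat R}(\widehat M)=\ir'_{\widehat R}(D(M))=\ir'_R(D(M))$, where the last equality is Lemma~\ref{L:3d} (the submodule lattice of an Artinian module is the same over $R$ as over $\widehat R$).

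The step I expect to be most delicate is not the Matlis duality dictionary itself but the bookkeeping between the $R$- and $\widehat R$-module structures: the clean bijection only arises once one passes from $M$ to $\widehat M$ on the primal side, so the identification $D(M)=D_{\widehat R}(\widehat M)$ and the independence of $\ir'$ from the choice of base ring (Lemma~\ref{L:3d}) must be woven explicitly into the argument. With those invocations in place, the remainder is pure translation through the duality.
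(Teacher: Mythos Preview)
Your argument is correct, and for the inequality you proceed exactly as the paper does, invoking Theorem~\ref{T:1} for the faithfully flat map $R\to\widehat R$.

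For the equality $\ir'_R(D(M))=\ir_{\widehat R}(\widehat M)$ your route genuinely differs from the paper's. Both begin by identifying $D_R(M)\cong D_{\widehat R}(\widehat M)$ and passing to the complete ring, but from there the paper argues by induction on $t=\ir_{\widehat R}(\widehat M)$: the base case $t=1$ is Lemma~\ref{L:3i}(a), and the inductive step peels off one irreducible component via the exact sequence $0\to M\to M/N_1\oplus M/N\to M/(N_1+N)\to 0$ and its Matlis dual, then does some careful bookkeeping to show $\ir'_R(D(M))$ decomposes as $1+(t-1)$. You instead set up the full order-reversing bijection $\widetilde N\leftrightarrow D(\widehat M/\widetilde N)$ between the submodule lattices of $\widehat M$ and $D(\widehat M)$ and observe that any such anti-isomorphism automatically swaps intersections with sums, hence irreducible submodules with sum-irreducible ones and irredundant decompositions with irredundant representations; the count of components then transfers in one step. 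The paper's Lemmas~\ref{L:3h} and~\ref{L:3i} are precisely the fragments of your lattice dictionary that its inductive argument needs. Your approach is more conceptual and shorter; the paper's is more explicit and self-contained, making no appeal to the general lattice statement. One small point: your justification of the identity $(\widetilde N_1\cap\widetilde N_2)^{\circ}=\widetilde N_1^{\circ}+\widetilde N_2^{\circ}$ via the embedding into the direct sum is a bit elliptical, but since the identity follows formally from the existence of an order-reversing bijection between complete lattices, the conclusion is not in doubt.
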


Theorem \ref{T:1}  is proved in Section \ref{section2} and Theorem \ref{T:2} is proved in Section \ref{section3}.

In what follows, unless otherwise stated, let $R$ be a commutative Noetherian ring. Let $M$ be a finitely generated $R$-module and $A$ an Artinian $R$-module. We denote by $\ell_R(M)$ the length of $M$. If $R$ is a local ring with the maximal ideal $\fkm$, denote  by $\widehat *$  the $\fkm$-adic completion and $D_R(*)$ the Matlis dual functor.

\section{Reducibility index under flat base change}\label{section2}

Throughout this section, let $\varphi: R\rightarrow S$ be a flat homomorphism of Noetherian rings. Let $M$ be a finitely generated $R$-module.  We denote by $\ir_R(M)$ the reducibility index of the zero submodule of $M$, that is the number of irreducible submodules appearing in an irredundant irreducible decomposition of the submodule $0$ of $M$.  For every prime ideal $\frak p$ of $R$, let $k(\frak p):=R_{\frak p}/\frak p R_{\frak p}$ be the residue field of $R_{\frak p}$ and set
$$ \mu_0(\p, M) = \dim_{k(\frak p)} \big(0:_{M_{\frak p}}\frak p R_{\frak p}\big),$$  the dimension of the socle of $M_{\frak p}$.  Note that $ \mu_0(\p, M)$ is  the $0$-th Bass number of $M$ with respect to $\frak p$, see \cite[page 101]{BH}.

In 1957, D. G.  Northcott  \cite{Nor} proved  that if  $(R, \frak m)$ is a Noetherian local ring and $\ell_R(M)<\infty$,  then 
$$\ir_R(M)=\dim_k\Soc (M)= \dim_k(0:_M\frak m),$$   
where $k=R/\frak m$ is the  residue field of $R$.  In general case where $M$ is not necessary of finite length and $R$ is not necessarily local,  we have the following result, see for example \cite[Lemma 2.3]{CQT}.

\begin{lemma}  \label{L:1}  
$$\ir_R(M)=\sum_{\p\in \Ass_R (M)} \mu_0(\p, M).$$
\end{lemma}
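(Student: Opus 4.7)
The plan is to convert the counting problem into one about the injective envelope $E:=E_R(M)$ and apply Matlis's structure theorem for injective modules over a Noetherian ring. That theorem furnishes a direct sum decomposition
$$E \cong \bigoplus_{\fkp \in \Spec R} E(R/\fkp)^{\mu_0(\fkp, M)},$$
where the exponents are the $0$-th Bass numbers (nonzero precisely when $\fkp \in \Ass_R(M)$) and the Krull--Schmidt-type uniqueness guarantees that the total number of indecomposable injective summands of $E$ is a well-defined invariant of $M$ equal to $\sum_{\fkp \in \Ass_R(M)} \mu_0(\fkp, M)$. The goal is therefore to match this total with $\ir_R(M)$.

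The heart of the argument is the following correspondence, which I would establish next. For a proper submodule $Q \subsetneq M$, the envelope $E_R(M/Q)$ is indecomposable if and only if $Q$ is irreducible in $M$: indecomposability translates, via essentiality of $M/Q$ in its envelope, to the condition that any two nonzero submodules of $M/Q$ meet nontrivially. Granting this, from an irredundant irreducible decomposition $0 = \bigcap_{i=1}^n Q_i$ the natural map $M \hookrightarrow \bigoplus_{i=1}^n M/Q_i$ is injective and, by irredundancy, essential; passing to injective envelopes yields $E \cong \bigoplus_{i=1}^n E_R(M/Q_i)$, a direct sum of $n$ indecomposable injectives. Conversely, a Matlis decomposition $E = \bigoplus_{j=1}^N E_j$ into indecomposables produces the submodules $Q_j := M \cap \bigl(\bigoplus_{k \ne j} E_k\bigr)$, which form an irredundant irreducible decomposition of $0$ in $M$ with exactly $N$ components.

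Combining these two passages with the uniqueness in the Matlis decomposition yields $\ir_R(M) = \sum_\fkp \mu_0(\fkp, M)$, as claimed. The main technical obstacle I anticipate lies in the equivalence between irredundancy of $\bigcap Q_i = 0$ and essentiality of $M \hookrightarrow \bigoplus M/Q_i$: the ``essential implies irredundant'' direction follows from projecting onto a single factor, whereas the converse requires showing that any nonzero submodule of the direct sum meets the image of $M$. I expect to handle this by exploiting the indecomposability of each $E_R(M/Q_i)$ together with the fact that $M/Q_i$ is essential inside $E_R(M/Q_i)$, so that nonzero submodules of $\bigoplus M/Q_i$ can be pulled back through the essential chain $M \subseteq \bigoplus M/Q_i \subseteq \bigoplus E_R(M/Q_i) = E$.
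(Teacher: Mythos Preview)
The paper does not supply its own proof of this lemma; it simply cites \cite[Lemma~2.3]{CQT}. Your route via the Matlis decomposition of the injective envelope $E_R(M)$ is the classical argument behind that result, and it is correct in outline: the indecomposable summands of $E_R(M)$ are counted by $\sum_{\fkp\in\Ass_R(M)}\mu_0(\fkp,M)$, and this count agrees with $\ir_R(M)$ through the correspondence you describe between irreducible submodules of $M$ and indecomposable injective pieces of $E_R(M)$.

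One point deserves tightening. In your closing paragraph you propose to verify that $M\hookrightarrow\bigoplus_i M/Q_i$ is essential by ``pulling back through the essential chain $M\subseteq\bigoplus M/Q_i\subseteq\bigoplus E_R(M/Q_i)=E$.'' But the identification $\bigoplus_i E_R(M/Q_i)=E_R(M)$ is exactly what essentiality of the first inclusion would yield, so invoking it here is circular. A clean way around this is to avoid proving essentiality directly: from the injection $M\hookrightarrow\bigoplus_{i=1}^{n}E_R(M/Q_i)$ you obtain only that $E_R(M)$ is a \emph{direct summand} of a sum of $n$ indecomposable injectives, whence by Azumaya--Krull--Schmidt $E_R(M)$ has at most $n$ indecomposable summands, i.e.\ $N\le n$. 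Your converse construction, taking $Q_j=M\cap\bigoplus_{k\ne j}E_k$, already yields an irredundant irreducible decomposition of $0$ with exactly $N$ terms; since the paper assumes the invariance of $\ir_R(M)$ as a known fact, this forces $n=\ir_R(M)=N$. The essentiality you wanted then follows a posteriori, and the formula is established.
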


Now we prove Theorem \ref{T:1}, which is the first main result  of this paper.  

\begin{proof}[Proof of Theorem \ref{T:1}] We have by Lemma \ref{L:1} that 
$$\ir_S(S\otimes_R M)={\displaystyle \sum_{\frak Q\in \Ass_S (S\otimes_R M)} \mu_0 (\frak Q, S\otimes_R M)}.$$
Since $\varphi: R\rightarrow S$ is a flat homomorphism of Noetherian rings, we have the following relation between the set of associated primes of $M$ and that of $S\otimes_R M$, see \cite[Theorem 23.2(ii)]{Mat}
$$\Ass_S (S\otimes_R M)=\bigcup_{\frak p\in \Ass_R (M)} \Ass (S/\frak p S).$$
Suppose that $\frak p_1, \frak p_2\in\Ass_R(M)$ and $\frak Q\in \Ass (S/\frak p_1 S) \cap \Ass (S/\frak p_2S)$. By the flatness of $\varphi$, we have by  \cite[Theorem 23.2(i)]{Mat}  that  $\frak Q\cap R=\frak p_1=\frak p_2$. It follows that the union $\displaystyle \bigcup_{\frak p\in \Ass_R (M)} \Ass (S/\frak p S)$ is a disjoint union.
Hence
$$\ir_S(S\otimes_R M)={\displaystyle \sum_{\frak p\in \Ass_R (M)}}\  {\displaystyle \sum_{\frak Q\in \Ass (S/\frak p S)}} \mu_0(\frak Q, S\otimes_R M).$$

Let $\frak p\in \Ass_R (M)$ and $\frak Q\in \Ass (S/\frak p S)$, we calculate $\mu_0(\frak Q, S\otimes_R M)$. We have by \cite[Theorem 3.84]{Rot} that 
$$ \Hom_{S_{\frak Q}}(S_{\frak Q}/\frak QS_{\frak Q}, (S\otimes_R M)_{\frak Q})\cong S_{\frak Q}\otimes_S \Hom_{S}(S/\frak Q, S\otimes_R M). $$
Since $S/\frak Q =S/(\frak Q +\p S) \cong S/\frak Q \otimes S/\p S,$
$$ S_{\frak Q}\otimes_S \Hom_{S}(S/\frak Q, S\otimes_R M) \cong S_{\frak Q}\otimes_S \Hom_{S}(S/\frak Q\otimes_S S/\fkp S, S\otimes_R M).$$
By Adjoint isomorphism \cite[Theorem 2.11]{Rot},
$$ S_{\frak Q}\otimes_S \Hom_{S}(S/\frak Q\otimes_S S/\fkp S, S\otimes_R M) \cong S_{\frak Q}\otimes_S \Hom_{S}(S/\frak Q, \Hom_S (S/\fkp S, S\otimes_R M)). $$
Then
\begin{align*}
&S_{\frak Q}\otimes_S \Hom_{S}(S/\frak Q, \Hom_S (S/\fkp S, S\otimes_R M)) \\
\cong &   S_{\frak Q}\otimes_S \Hom_{S}(S/\frak Q, S\otimes_R \Hom_R (R/\fkp, M))\\
\cong & \Hom_{S_{\frak Q}}(S_{\frak Q}/\frak Q S_{\frak Q}, S_{\frak Q}\otimes_R \Hom_R (R/\fkp, M))\\
\cong & \Hom_{S_{\frak Q}}(S_{\frak Q}/\frak Q S_{\frak Q}, S_{\frak Q}\otimes_{R_\fkp} \Hom_{R_\fkp} (R_\fkp/\fkp R_\fkp, M_\fkp))
\end{align*}
by \cite[Theorem 3.84]{Rot}. On the other hand, 
$\Hom_{R_\fkp} (R_\fkp/\fkp R_\fkp, M_\fkp) $ is a finitely generated $R_\fkp/\fkp R_\fkp$-vector space of dimension $\mu_0(\fkp, M)$,
\begin{align*}
&\Hom_{S_{\frak Q}}(S_{\frak Q}/\frak Q S_{\frak Q}, S_{\frak Q}\otimes_{R_\fkp} \Hom_{R_\fkp} (R_\fkp/\fkp R_\fkp, M_\fkp)) \\
\cong & \Hom_{S_{\frak Q}}(S_{\frak Q}/\frak Q S_{\frak Q}, S_{\frak Q}\otimes_{R_\fkp} (R_\fkp/\fkp R_\fkp)^{\oplus \mu_0(\fkp, M)})\\
\cong & \Hom_{S_{\frak Q}}(S_{\frak Q}/\frak Q S_{\frak Q}, (S/\fkp S)_{\frak Q})^{\oplus \mu_0(\fkp, M)}.
\end{align*}

Hence $\mu_0(\frak Q, S\otimes_R M)=\mu_0(\frak Q, S/\frak p S){\cdot}\mu_0(\frak p, M)$. Therefore we have by Lemma \ref{L:1} that 
\begin{align} \ir_S(S\otimes_R M)&={\displaystyle \sum_{\frak p\in \Ass_R (M)}}\  {\displaystyle \sum_{\frak Q\in \Ass (S/\frak p S)}} \mu_0(\frak Q, S/\frak p S){\ }\mu_0(\frak p, M)\notag\\
&={\displaystyle \sum_{\frak p\in \Ass_R (M)}}\ \mu_0(\frak p, M){\ }\ir (S/\frak p S).\notag\end{align}
We get by the definition of $t$ and by  Lemma \ref{L:1} that
$$\ir_S(S\otimes_R M)\leq t {\displaystyle \sum_{\frak p\in \Ass_R (M)}}\ \mu_0(\frak p, M)=t \cdot\ir_R(M).$$
Now assume that $\varphi$ is faithfully flat.  Then $S/\frak p S \neq 0$ for all  $\frak p \in \Ass_R(M)$.  This implies that $\ir (S/\frak p S) \geq 1$ for all $\frak p\in\Ass_R(M).$ Therefore, we have by Lemma \ref{L:1} that
$$\ir_S(S\otimes_R M)={\displaystyle \sum_{\frak p\in \Ass_R (M)}}\ \mu_0(\frak p, M){\ }\ir (S/\frak p S)\geq {\displaystyle \sum_{\frak p\in \Ass_R (M)}}\ \mu_0(\frak p, M)=\ir_R(M).$$
In particular, the equality  $ \ir_R(M)=\ir_S(S\otimes_R M)$ holds true if and only if $\ir (S/\frak p S)=1$ for all $\frak p\in\Ass_R(M),$ if and only if $\frak p S$ is irreducible in $S$ for all $\frak p\in\Ass_R(M).$
\end{proof}

We note that the inequality $\ir_S (S\otimes_R M)\geq \ir_S(M)$ stated in Theorem \ref{T:1} does not hold without the assumption of faithfully flatness of $\varphi$.  
\begin{example} \label{E:1} {\rm Let $R$ be a Noetherian domain and $0\neq a\in R$  such that $a$ is not a unit of $R$. Let $\rm Q(R)$ denote the field of fractions of $R$. Then the natural map $R\to \rm Q(R)$ is a flat homomorphism, but not faithfully flat. Since $a$ is not unit, $R\neq aR.$ Hence $\ir (R/aR)>0$. It is clear that $\ir_{\rm Q(R)} (\rm Q(R)\otimes_R R/aR)=0$}.
\end{example}

As a consequence of Theorem \ref{T:1}, we have the following result which gives the property of the irreducibility index under localization.

\begin{corollary} \label{C:1} Let $U$ be a multiplicative closed subset of $R$. Denote by $U^{-1}R$ (resp. $U^{-1}M$) the ring of fractions of $R$ with respect to $U$ (resp. the module of fractions of $M$ with respect to $U$). Then we have
$$\ir_{U^{-1}R}(U^{-1}M)=\underset{\frak p\cap U=\emptyset}{\sum_{\frak p\in \Ass_R (M)}}\mu_0(\frak p, M).$$
In particular, $\ir_{U^{-1}R}(U^{-1}M)\leq \ir_R(M)$. The equality holds true if and only if $\frak p\cap U=\emptyset$ for all $\frak p\in\Ass_R(M).$ 
\end{corollary}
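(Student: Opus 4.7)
The plan is to apply Theorem \ref{T:1} with $S:=U^{-1}R$, since the localization map $R\to U^{-1}R$ is a flat homomorphism of Noetherian rings. This gives
$$\ir_{U^{-1}R}(U^{-1}M)=\sum_{\fkp\in\Ass_R(M)}\ir_{U^{-1}R}\bigl(U^{-1}R/\fkp\, U^{-1}R\bigr)\cdot\mu_0(\fkp,M),$$
so everything reduces to computing the factor $\ir_{U^{-1}R}(U^{-1}R/\fkp\,U^{-1}R)$ for each $\fkp\in\Ass_R(M)$.

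First I would split on whether $\fkp\cap U=\emptyset$. If $\fkp\cap U\neq\emptyset$, then $U^{-1}R/\fkp\, U^{-1}R\cong U^{-1}(R/\fkp)=0$, so the corresponding term vanishes. If $\fkp\cap U=\emptyset$, then $U^{-1}R/\fkp\,U^{-1}R\cong U^{-1}(R/\fkp)$ is a domain, whose only associated prime is $U^{-1}\fkp$ and whose localization at $U^{-1}\fkp$ is the field of fractions of $R/\fkp$. Hence by Lemma \ref{L:1} we get $\ir_{U^{-1}R}(U^{-1}R/\fkp\,U^{-1}R)=\mu_0(U^{-1}\fkp,U^{-1}(R/\fkp))=1$. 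Substituting these two values back into the displayed formula immediately yields
$$\ir_{U^{-1}R}(U^{-1}M)=\underset{\fkp\cap U=\emptyset}{\sum_{\fkp\in\Ass_R(M)}}\mu_0(\fkp,M).$$

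For the ``in particular'' statement, I would compare this sum with the formula $\ir_R(M)=\sum_{\fkp\in\Ass_R(M)}\mu_0(\fkp,M)$ given by Lemma \ref{L:1}. Since every term $\mu_0(\fkp,M)$ is a non-negative integer and is strictly positive for $\fkp\in\Ass_R(M)$ (as $\Hom_{R_\fkp}(k(\fkp),M_\fkp)\neq 0$ whenever $\fkp\in\Ass_R(M)$), the inequality $\ir_{U^{-1}R}(U^{-1}M)\leq \ir_R(M)$ is clear and equality holds precisely when no associated prime of $M$ meets $U$.

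I expect no real obstacle: the argument is essentially a direct specialization of Theorem \ref{T:1}. The only subtle point is justifying that $\ir_{U^{-1}R}(U^{-1}R/\fkp\,U^{-1}R)=1$ whenever $\fkp\cap U=\emptyset$, which one could alternatively observe by noting that $\fkp\,U^{-1}R=U^{-1}\fkp$ is a prime ideal of $U^{-1}R$, hence irreducible as a submodule of $U^{-1}R$.
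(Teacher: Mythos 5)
Your proof is correct and follows essentially the same route as the paper's: specialize Theorem \ref{T:1} to $S=U^{-1}R$, split on whether $\fkp\cap U$ is empty (giving $\ir(U^{-1}R/\fkp U^{-1}R)$ equal to $1$ or $0$ respectively), and then compare with the formula for $\ir_R(M)$ from Lemma \ref{L:1}, using that $\mu_0(\fkp,M)\geq 1$ for $\fkp\in\Ass_R(M)$. The only difference is cosmetic — you justify $\ir(U^{-1}R/\fkp U^{-1}R)=1$ by computing Bass numbers and also via irreducibility of the prime $\fkp U^{-1}R$, while the paper simply cites the latter.
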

\begin{proof} 
Let $\frak p\in\Ass_R(M).$ If $\frak p\cap U\neq \emptyset$, then $U^{-1}R=\frak p U^{-1}R$. If $\frak p\cap U=\emptyset$, then $\frak p U^{-1}R$ is a prime ideal of $U^{-1}R.$ In this case, $\ir (U^{-1}R/\frak p U^{-1}R)=1.$  Now, the result follows by Theorem \ref{T:1} and Lemma \ref{L:1}.
\end{proof}

Next, we examine the irreducibility index under the polynomial extensions and formal power series extensions. 
\begin{corollary}\label{C:2} Let $R[x_1, \ldots , x_n]$ (resp. $R[[x_1, \ldots , x_n]]$) be the ring of polynomials in $n$ variables with coefficients in $R$ (resp. the ring of formal power series in $n$ variables with coefficients in $R$). Then
$$\ir (R)=\ir(R[x_1, \ldots , x_n])=\ir (R[[x_1, \ldots , x_n]]).$$
\end{corollary}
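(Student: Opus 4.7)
The plan is to invoke Theorem~\ref{T:1} twice, both times with $M=R$: once with $S=R[x_1,\ldots,x_n]$ and once with $S=R[[x_1,\ldots,x_n]]$. Both of these ring extensions are classically faithfully flat over $R$, so Theorem~\ref{T:1} applies, and in each case the desired equality $\ir(R)=\ir(S)$ reduces to the equality criterion at the end of Theorem~\ref{T:1}: it suffices to show that $\p S$ is irreducible in $S$ for every $\p\in\Ass_R(R)$.

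Rather than work with associated primes directly, I would prove the cleaner statement that $\p S$ is a \emph{prime} ideal of $S$ for every prime $\p$ of $R$. This is stronger than what is needed: in any integral domain the zero ideal is irreducible, because two non-zero ideals in a domain always meet non-trivially (multiply a non-zero element of one by a non-zero element of the other). Hence $S/\p S$ being a domain forces $\ir_S(S/\p S)=1$, which is exactly the hypothesis of the equality criterion.

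The core step is therefore the identification
$$S/\p S \;\cong\; (R/\p)[x_1,\ldots,x_n] \quad\text{or}\quad S/\p S \;\cong\; (R/\p)[[x_1,\ldots,x_n]],$$
both of which are integral domains since $R/\p$ is. In the polynomial case the identification is immediate from the universal property. In the formal power series case it is subtler, and this is where I expect the main (but standard) difficulty to lie: one must show that $\p R[[x_1,\ldots,x_n]]$ coincides with the ideal of power series every coefficient of which lies in $\p$. The Noetherian hypothesis on $R$ enters here in an essential way: choosing generators $a_1,\ldots,a_r$ of $\p$, one collects the coefficients of a power series with coefficients in $\p$ to realise it as an $R[[x_1,\ldots,x_n]]$-linear combination of the $a_i$.

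Once these identifications are in place, applying Theorem~\ref{T:1} with $M=R$ collapses to
$$\ir_S(S)=\sum_{\p\in\Ass_R(R)}\ir_S(S/\p S)\cdot\mu_0(\p,R)=\sum_{\p\in\Ass_R(R)}\mu_0(\p,R)=\ir_R(R),$$
where the last equality is Lemma~\ref{L:1}. Taking $S$ to be the polynomial ring and then the formal power series ring yields the two equalities of the corollary simultaneously.
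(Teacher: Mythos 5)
Your proposal is correct and takes essentially the same approach as the paper: apply Theorem~\ref{T:1} with $M=R$, reduce to showing $\ir(S/\fkp S)=1$ for every $\fkp\in\Ass(R)$, and establish this by identifying $S/\fkp S$ with $(R/\fkp)[x_1,\ldots,x_n]$ or $(R/\fkp)[[x_1,\ldots,x_n]]$, a domain, so that $\fkp S$ is prime. The only substantive difference is that you spell out the power-series identification $R[[x_1,\ldots,x_n]]/\fkp R[[x_1,\ldots,x_n]]\cong (R/\fkp)[[x_1,\ldots,x_n]]$ and correctly flag where finite generation of $\fkp$ is used, whereas the paper dispatches the power-series case with ``the rest statement follows by the same arguments.''
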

\begin{proof} Set $S:=R[x_1, \ldots , x_n].$ Let $\frak p\in\Ass (R).$ Since $S/\frak pS\cong (R/\frak p)[x_1, \ldots , x_n]$ is a domain, it follows that $\frak pS\in\Spec (S)$. Hence  $\ir (S/\frak pS)=1$. Therefore $\ir (R)=\ir (S)$ by Theorem \ref{T:1} and Lemma \ref{L:1}. The rest statement follows by the same arguments.
\end{proof}

By Theorem \ref{T:1} and Lemma \ref{L:1}, for a finitely generated $R$-module $M$, $\ir_R(M) =\ir_S(S\otimes_R~M)$ if and only if $\ir (S/\frak p S)=1$ for all $\frak p\in\Ass_R (M)$. Therefore it is natural to consider the structure of Noetherian rings with irreducibility index one. Recall that a Noetherian ring $R$ is said to be {\it generically Gorenstein if } $R_{\frak p}$ is Gorenstein for all minimal prime ideals $\frak p$ of $R$ (see \cite[page 248]{LW}). 

\begin{proposition}\label{a2.5}
Let $R$ be a Noetherian ring. Then the following conditions are equivalent.

{\rm (a)} $\ir (R)=1$.

{\rm (b)} $\Card(\Ass (R))=1$ and $R$ is generically Gorenstein.

In particular, if $R$ is a Noetherian domain, then $\ir (R)=1$. 
\end{proposition}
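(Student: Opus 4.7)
The plan is to apply Lemma \ref{L:1} with $M = R$, which reduces the reducibility index to a sum of Bass numbers:
$$\ir(R) = \sum_{\p \in \Ass(R)} \mu_0(\p, R).$$
The first key observation I would make is that for every $\p \in \Ass(R)$, one has $\p R_\p \in \Ass(R_\p)$, so the socle of $R_\p$ is nonzero and $\mu_0(\p, R) \geq 1$. Consequently the above sum is bounded below by $\Card(\Ass(R))$, and it equals~$1$ if and only if $\Card(\Ass(R)) = 1$ and the unique summand $\mu_0(\p, R)$ equals~$1$.

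Next I would note that when $\Ass(R) = \{\p\}$, the prime $\p$ must be the unique minimal prime of $R$, since every minimal prime lies in $\Ass(R)$. Hence $\dim R_\p = 0$, i.e.\ $R_\p$ is an Artinian local ring. I would then invoke the standard characterization that an Artinian local ring is Gorenstein precisely when its socle is one-dimensional over the residue field, i.e.\ exactly when $\mu_0(\p, R) = 1$. Combining this with the previous step yields the equivalence of (a) and (b): the single associated prime is automatically the unique minimal prime, and being generically Gorenstein at it amounts to $\mu_0(\p, R) = 1$.

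For the last sentence, I would simply observe that if $R$ is a Noetherian domain then $\Ass(R) = \{(0)\}$ and $R_{(0)}$ is a field, hence trivially Gorenstein; so (b) holds and $\ir(R) = 1$.

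I do not anticipate a serious obstacle here: the argument is essentially a direct bookkeeping using Lemma \ref{L:1} together with the classical description of zero-dimensional Gorenstein local rings. The only step that requires a moment's care is recognizing that $\Card(\Ass(R)) = 1$ automatically forces the unique associated prime to be a minimal prime, which is what allows us to conclude that $R_\p$ is Artinian and hence to apply the socle criterion for the Gorenstein property.
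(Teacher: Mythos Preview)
Your proposal is correct and follows essentially the same approach as the paper's proof: both reduce to Lemma~\ref{L:1}, observe that each Bass number $\mu_0(\fkp,R)$ is positive for $\fkp\in\Ass(R)$, and then identify the condition $\mu_0(\fkp,R)=1$ with $R_\fkp$ being Gorenstein (the paper phrases this via the Cohen--Macaulay type $\rmr(R_\fkp)$, while you use the socle criterion directly). Your explanation of why the unique associated prime must be minimal, hence $R_\fkp$ Artinian, is in fact a bit more explicit than the paper's.
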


\begin{proof} (a) $\Rightarrow$ (b). By the assumption (a) and by Lemma \ref{L:1}, we have $$\ir (R)=\sum_{\fkp\in \Ass (R)} \mu(\fkp, R)=1.$$ Since $\mu(\fkp, R)>0$ for all $\fkp\in \Ass (R)$, it follows that $\Ass (R)$ has a single element $\frak p$. Furthermore $1=\mu(\fkp, R)=\rm r(R_{\frak p})$, thus $R_\fkp$ is Artin,  where $\rm r(*)$ denotes the Cohen-Macaulay type. Hence  $R_\fkp$ is Gorenstein, that is, $R$ is generically Gorenstein.

(b) $\Rightarrow$ (a). Set $\Ass (R)=\{ \fkp\}$.  By Lemma \ref{L:1}, we have $$\ir (R)=\mu(\fkp, R)=\ell_{R_\fkp}(\Hom_{R_\fkp}(R_\fkp/\fkp R_\fkp, R_{\fkp}))=\rmr(R_\fkp).$$  Since $R_\fkp$ is Gorenstein,  $\ir(R)=1$.
\end{proof}

\begin{corollary}
Let $S=k[x_1, x_2, \dots, x_n]$ be the polynomial ring of $n$ variables over a field $k$ and $f\in S$ a non-zero polynomial. Then $\ir (S/fS)=1$ if and only if $f=\alpha{\cdot}p^{m}$, where $\alpha\in k$ is a non-zero element, $m$ is a positive integer and $p\in S$ is an irreducible polynomial.
\end{corollary}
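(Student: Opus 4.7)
The plan is to reduce the claim to Proposition \ref{a2.5} applied to the ring $R = S/fS$. Since $S = k[x_1, \ldots, x_n]$ is a UFD, write $f = \alpha p_1^{m_1} \cdots p_r^{m_r}$ as a factorization into distinct non-associate irreducibles, where $\alpha \in k^*$ and each $m_i \geq 1$. I want to show that (a) $\Ass(S/fS) = \{p_1 S, \ldots, p_r S\}$, and (b) $S/fS$ is automatically generically Gorenstein, so that by Proposition \ref{a2.5} the condition $\ir(S/fS)=1$ is equivalent to $r=1$.

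First I would establish the primary decomposition $fS = \bigcap_{i=1}^r p_i^{m_i} S$. Each $p_i S$ is a height-one prime ideal of $S$, so $S_{p_iS}$ is a DVR with uniformizer $p_i$; hence $p_i^{m_i} S$ is $p_iS$-primary. In the UFD $S$ these ideals are pairwise comaximal in the sense that $\sqrt{p_i^{m_i}S + p_j^{m_j}S} = (p_i,p_j)S$ is strictly larger than each $p_\ell S$, which implies that the intersection is irredundant. Taking associated primes of the quotient then gives $\Ass(S/fS) = \{p_1 S, \ldots, p_r S\}$, so $\Card(\Ass(S/fS)) = r$.

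Second, I would verify the generic Gorenstein property. The minimal primes of $S/fS$ are precisely the images $p_i S / fS$. Localizing at $p_i S$ turns the other factors $p_j$ ($j \neq i$) into units, so
\[
(S/fS)_{p_i S} \cong S_{p_iS}/p_i^{m_i} S_{p_iS}.
\]
This is a quotient of a DVR by a power of its maximal ideal, hence an Artinian principal ideal ring, hence Gorenstein. Thus $S/fS$ is generically Gorenstein regardless of the factorization of $f$.

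Finally, Proposition \ref{a2.5} tells us that $\ir(S/fS) = 1$ if and only if $\Card(\Ass(S/fS)) = 1$ together with generic Gorenstein-ness; the second condition is free from the previous step, so the equivalence reduces to $r = 1$, which is exactly $f = \alpha p^m$ for an irreducible $p \in S$ and integer $m \geq 1$. I do not anticipate a substantial obstacle: the only delicate point is justifying the primary decomposition in the UFD carefully enough to pin down $\Ass(S/fS)$, but this is standard.
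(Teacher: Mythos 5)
Your proof is correct and takes essentially the same approach as the paper: both reduce to Proposition \ref{a2.5} and compute $\Ass(S/fS)$ from the factorization of $f$ into irreducibles. The only cosmetic difference is that the paper dispatches the generic Gorenstein condition in one stroke by noting $S/fS$ is a hypersurface, whereas you verify it by hand by observing that each localization at a minimal prime is a quotient of a DVR by a power of its maximal ideal.
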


\begin{proof}
Since $S/fS$ is a hypersurface, $\ir (S/fS)=1$ if and only if $\Card(\Ass (S/fS))=1$. Write $f=\alpha{\cdot}p_1^{n_1}{\cdot}p_2^{n_2}\cdots {\cdot}p_{r}^{n_r}$, where $\alpha$ is a non-zero element of $k$, $p_1$, $p_2$, $\dots$, $p_r\in S$ are irreducible polynomials, and $n_1, n_2, \ldots , n_r$ are positive integers. Then $\Ass (S/fS)=\{ p_1S, p_2S, \dots, p_rS\}$, whence $r=1$ if and only if $\Card(\Ass (S/fS))=1$.
\end{proof}

From now to the end of this section, assume that $(R,\m)$ is a Noetherian local ring with the unique maximal ideal $\m$. Consider the natural faithfully flat homomorphism $R\rightarrow \widehat R,$ where $\widehat *$ denotes the $\m$-adic completion. It follows by Theorem \ref{T:1} that  $\ir_R(M)\leq \ir_{\widehat R}(\widehat M).$ The equality does not hold in general.

\begin{example}\label{E:2} 
D. Ferrand  and M. Raynaud \cite{FR}  constructed a two-dimensional Noetherian local domain $(R, \frak m)$ such that $\widehat R$ has an embedded prime $\frak Q$ of dimension $1$ (see also \cite[Section 3, Example 2]{Yo}).
Since $R$ is a domain, by Proposition \ref{a2.5}, $\ir (R)=1$. On the other hand, since $\dim R=\dim \widehat{R}=2$, there exists an associated prime $\frak Q'$ of $\widehat R$ of dimension $2$.  It follows that $\ir (\widehat{R})\geq \Card (\Ass (\widehat R))\geq 2.$
\end{example}

On the other hand,  the following holds.


\begin{proposition} \label{P:2} Suppose that $(R, \frak m)$ is an excellent local ring. For a finitely generated $R$-module $M$, the following statements are equivalent:

{\rm (a)}  $\ir_R(M)=\ir_{\widehat R}(\widehat M)$;

{\rm (b)} $\p \widehat R$ is a prime ideal of $\widehat R$ for all $\p$ in $\Ass_R(M)$.
\end{proposition}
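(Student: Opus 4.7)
The plan is to reduce the proposition to a question about when $\ir_{\widehat R}(\widehat R/\p\widehat R)=1$, and then use excellence to analyze that via Proposition \ref{a2.5}.

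First I would apply Theorem \ref{T:1} to the faithfully flat homomorphism $R\to\widehat R$ to obtain
$$\ir_{\widehat R}(\widehat M)=\sum_{\p\in\Ass_R(M)}\ir_{\widehat R}(\widehat R/\p\widehat R)\cdot\mu_0(\p,M),$$
and compare with $\ir_R(M)=\sum_{\p\in\Ass_R(M)}\mu_0(\p,M)$ from Lemma \ref{L:1}. Since $\widehat R/\p\widehat R\neq 0$ by faithful flatness we have $\ir_{\widehat R}(\widehat R/\p\widehat R)\geq 1$ and $\mu_0(\p,M)\geq 1$ for every $\p\in\Ass_R(M)$. Thus equality (a) holds if and only if $\ir_{\widehat R}(\widehat R/\p\widehat R)=1$ for every $\p\in\Ass_R(M)$.

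Next I would translate the condition $\ir_{\widehat R}(\widehat R/\p\widehat R)=1$ using Proposition \ref{a2.5}, which says that this is equivalent to $\widehat R/\p\widehat R$ having exactly one associated prime and being generically Gorenstein. The implication (b)$\Rightarrow$(a) is then immediate: if $\p\widehat R$ is prime, then $\widehat R/\p\widehat R$ is a domain, so it automatically has a unique associated prime and is generically Gorenstein (its localization at the generic point is a field), giving $\ir_{\widehat R}(\widehat R/\p\widehat R)=1$. Note that excellence is not needed for this direction.

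For (a)$\Rightarrow$(b), the key step is to use excellence. Since $R$ is excellent, so is $R/\p$; because an excellent local ring has geometrically regular, and in particular reduced, formal fibers, the completion of the reduced ring $R/\p$ is reduced. Using the flat base change isomorphism $\widehat R/\p\widehat R\cong\widehat{R/\p}$, we conclude that $\widehat R/\p\widehat R$ is reduced. In a reduced Noetherian ring, the associated primes coincide with the minimal primes, so the assumption $\ir_{\widehat R}(\widehat R/\p\widehat R)=1$ forces $\widehat R/\p\widehat R$ to have a unique minimal prime; combined with reducedness this means $\widehat R/\p\widehat R$ is a domain, i.e.\ $\p\widehat R$ is prime.

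The main obstacle is the reducedness step: one must invoke the nontrivial fact that excellent local rings have reduced formal fibers (equivalently, that reducedness is preserved under $\m$-adic completion for excellent local rings), since without this input the condition $|\Ass(\widehat R/\p\widehat R)|=1$ would only give that $\p\widehat R$ is primary rather than prime, and Example \ref{E:2} shows that some such hypothesis is indeed required.
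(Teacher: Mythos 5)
Your proof is correct and takes essentially the same route as the paper's: reduce via Theorem \ref{T:1} to the condition $\ir_{\widehat R}(\widehat R/\p\widehat R)=1$ for all $\p\in\Ass_R(M)$, invoke Proposition \ref{a2.5} to get $\Card\bigl(\Ass_{\widehat R}(\widehat R/\p\widehat R)\bigr)=1$, and use excellence of $R/\p$ to conclude that $\widehat R/\p\widehat R\cong\widehat{R/\p}$ is reduced, hence a domain. The only cosmetic difference is that you spell out the (b)$\Rightarrow$(a) direction (noting a domain satisfies the hypotheses of Proposition \ref{a2.5}) and explicitly remark that excellence is not needed there, whereas the paper simply says the converse is clear.
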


\begin{proof} 
Let prove (a) $\Rightarrow$ (b). By Theorem \ref{T:1}, $\ir_R(M)=\ir_{\widehat R}(\widehat M)$ if and only if $\ir (\widehat{R}/\frak p \widehat{R})=1$ for all $\frak p\in\Ass_R (M)$. This is equivalent to saying that $\widehat{R}/\frak p \widehat{R}$ is generically Gorenstein and $\Card(\Ass_{\widehat{R}}(\widehat{R}/\frak p \widehat{R}))=1$ by Proposition \ref{a2.5}. On the other hand, by our assumption, $\widehat{R}/\frak p \widehat{R}$ is reduced. In fact, since $R$ is excellent, so is $R/\fkp$. Hence $\widehat{R}/\frak p \widehat{R}$ is reduced since $R/\fkp$ is domain. It follows that $\frak p\widehat{R}=\Rad (\frak p\widehat{R})$, which is a prime ideal of $\widehat R$ since $\Card(\Ass_{\widehat{R}}(\widehat{R}/\frak p \widehat{R}))=1$. The converse (b) $\Rightarrow$ (a) is now clear.
\end{proof}

\begin{example} Let $k$ be a field of characteristic zero, let $k[x]$ be the polynomial ring of one variable $x$ over $k.$ Let $R=k[x]_{(x)}$, the localization of $k[x]$ with respect to prime ideal $(x)$. Then $R$ is excellent ring,  $\widehat R =k[[x]]$, and $\Spec(R)=\{0, xR\}$, $\Spec(\widehat R)=\{0, x\widehat R\}$. It follows by Proposition \ref{P:2} that $\ir_R(M)=\ir_{\widehat R}(\widehat M)$ for all finitely generated $R$-module $M$.
\end{example}

\section{Sum-reducibility index of Artinian modules}\label{section3}

 In this section, let $R$ be a Noetherian  ring,  $M$ a nonzero finitely generated $R$-module, and $A$ a nonzero Artinian $R$-module.   

 I. G. Macdonald \cite{Mac} introduced the set of attached primes for Artinian modules, which makes an important role similarly to that of  associated primes for Noetherian modules. For given $\frak p\in\Spec (R)$,  we say that $A$ is {\it $\frak p$-secondary} if the multiplication by $a$ on $A$ is nilpotent for all $a\in \frak p$ and  surjective for all $a\in  R\setminus \frak p$. In general, $A$ admits a minimal secondary representation  $A=A_1+\ldots+A_n$, where each $A_i$ is $\p_i$-secondary. The set $\{\frak p_1,\ldots,\frak p_n\}$ is independent of the choice of the minimal secondary representation of $A$. This set is called the {\it set of  attached primes} of $A$ and denoted by $\Att_R(A)$.   

\begin{remark} \label{R:3a} {\rm  If $N$ is an irreducible submodule of $M$, then $N$ is primary. Therefore, each irredundant irreducible  decomposition of the submodule $0$ in $M$ can be reformed to a reduced primary decomposition of $0$. In particular,  $ \ir_R(M)\geq \Card (\Ass_R(M)).$  Similarly, if $B$ is a sum-irreducible submodule of $A$, then $B$ is a secondary submodule, see \cite{Mac}. Therefore,  each irredundant sum-irreducible  representation of $A$ can be reduced to a minimal secondary representation of $A$. In particular, $ \ir'_R(A)\geq \Card (\Att_R(A)).$}
\end{remark}

Next we compare the sum-reducibility index of $A$ and that of a quotient of $A$.

\begin{lemma} \label{L:3b} If  $B\subsetneq A$ is a submodule of $A$, then  $\ir'_{R}(A/B) \leq \ir'_{R}(A).$
In particular, if $A$ is a sum-irreducible, then so is $A/B$.
\end{lemma}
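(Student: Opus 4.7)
The plan is to start from an irredundant sum-irreducible representation of $A$, push it through the quotient map $\pi : A \twoheadrightarrow A/B$, and show that the resulting representation of $A/B$ is built from sum-irreducibles (after discarding zero summands). Since the count of summands can only decrease, the bound $\ir'_R(A/B)\leq \ir'_R(A)$ will follow.

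Concretely, let $A=A_1+A_2+\cdots+A_n$ be an irredundant sum-irreducible representation with $n=\ir'_R(A)$. Applying $\pi$ gives
\[
A/B = \pi(A_1)+\pi(A_2)+\cdots+\pi(A_n) = \sum_{i=1}^n \frac{A_i+B}{B},
\]
and each summand is isomorphic to $A_i/(A_i\cap B)$. The main technical step, which I expect to be the key (though not really an obstacle), is the following claim: \emph{any nonzero quotient of a sum-irreducible module is sum-irreducible}. To see this, suppose $A_i/(A_i\cap B)\neq 0$ and write it as $C_1+C_2$ with $C_1,C_2$ proper submodules. Pull back via the correspondence theorem to obtain submodules $D_j=\pi_i^{-1}(C_j)\subseteq A_i$ containing $A_i\cap B$; both $D_j$ are proper in $A_i$ because the $C_j$ are proper, and they satisfy $D_1+D_2=A_i$ (since $A_i\cap B\subseteq D_1\cap D_2$, no extra piece is needed to recover $A_i$). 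This contradicts sum-irreducibility of $A_i$.

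With this claim in hand, each $\pi(A_i)$ is either zero or sum-irreducible. Discarding the zero terms and then trimming to an irredundant subfamily gives a sum-irreducible representation of $A/B$ with at most $n$ summands, so $\ir'_R(A/B)\leq n = \ir'_R(A)$.

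For the "in particular" statement, assume $A$ is sum-irreducible, i.e. $\ir'_R(A)=1$. The hypothesis $B\subsetneq A$ forces $A/B\neq 0$, hence $\ir'_R(A/B)\geq 1$. Combined with the inequality just proved, we get $\ir'_R(A/B)=1$, so $A/B$ is sum-irreducible, as desired.
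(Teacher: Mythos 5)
Your proof is correct and takes essentially the same route as the paper's: push an irredundant sum-irreducible representation of $A$ through the quotient map, observe that each nonzero image $(A_i+B)/B\cong A_i/(A_i\cap B)$ remains sum-irreducible, discard zero terms, and trim to an irredundant representation. The only cosmetic difference is ordering — the paper proves the ``in particular'' statement (a nonzero quotient of a sum-irreducible module is sum-irreducible) first and feeds it into the count, whereas you prove the count first and deduce the particular case; the underlying key step is identical.
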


\begin{proof}  Firstly, assume that $A$ is sum-irreducible. If $A/B=C_1/B+C_2/B,$ where $C_1, C_2$ are submodules of $A$ containing $B$, then $A=C_1+C_2$, and hence $A=C_i$ for some $i\in\{1, 2\}.$ Hence $A/B$ is sum-irreducible.

Now assume that $\ir'_{R}(A)=n$. Let $A=A_1+\ldots +A_n$ be an irredundant sum-irreducible representation of $A$.  Then we have
$$A/B = (A_1+B)/B+\ldots+(A_n+B)/B.$$
 For each $i$, if $A_i\not\subseteq B$, then  $(A_i+B)/B\cong A_i/(A_i\cap B)$ is  sum-irreducible.  Thus, from the above representation, we can reduce to an irredundant sum-irreducible representation of $A/B$ with at most $n$ components. Hence  $\ir'_{R}(A/B) \leq n$. 	
\end{proof}

For $\p \in \Att_{R}(A)$, let $\Lambda'_{\mathfrak{p}}(A)$ denote the set of all $\p$-secondary submodules of $A$ which appear in a minimal secondary representation of $A$.  It follows by \cite{Mat} that if $\frak p\in\min\Att_R(A)$, then $\Lambda'_{\mathfrak{p}}(A)$ has a unique element. Suppose $\p\in\Att_R(A)$ is an embedded attached prime.  A submodule $B\in \Lambda'_{\mathfrak{p}}(A)$ is said to be  a $\mathfrak{p}$-minimal embedded component of $A$  if  $B$ is a minimal element in the set $\Lambda'_{\mathfrak{p}}(A)$ (under the inclusion).  If  ${\rm{Att}}_R(A) = \{{\mathfrak{p}}_1,\ldots,{\mathfrak{p}}_n\}$ and $B_i \in \Lambda'_{\mathfrak{p}_i}(A)$ for $i=1,\ldots,n$, then  $A=B_1+ \ldots +B_n$ is a minimal  secondary representation of $A$, see  \cite[Theorem 4.1.2]{Y}. 
The following result  is an analogue  of  \cite [Theorem 3.2]{CQT} on the reducibility index of finitely generated modules. 

 \begin{proposition} \label{P:3c}  Let $A=B_1+\ldots+ B_n$ be a minimal secondary representation of $A$, where $B_i$ is $\mathfrak{p}_i$-secondary for all $i=1,\ldots, n.$ If $B_i$ is a $\mathfrak{p}_i$-minimal embedded component of $A$ for all embedded attached prime ideals $\frak p_i$ of $A$, then 
$$\ir'_R(A)=\ir'_R(B_1)+\ldots+\ir'_R(B_n).$$
 \end{proposition}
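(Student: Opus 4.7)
The plan is to produce one explicit irredundant sum-irreducible representation of $A$ of length $\sum_{i=1}^{n}\ir'_R(B_i)$, and then invoke Macdonald's invariance of the length of irredundant sum-irreducible representations to conclude the equality in one pass.

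First, for each $i\in\{1,\ldots,n\}$ I would fix an irredundant sum-irreducible representation $B_i=F_{i,1}+\cdots+F_{i,k_i}$ with $k_i=\ir'_R(B_i)$. Because $B_i$ is $\mathfrak{p}_i$-secondary we have $\Att_R(B_i)=\{\mathfrak{p}_i\}$, and since each $F_{i,l}$ is sum-irreducible, hence secondary by Remark \ref{R:3a}, reducing the chosen representation of $B_i$ to a minimal secondary representation forces every $F_{i,l}$ to be $\mathfrak{p}_i$-secondary. Gluing these together, $A=\sum_{i,l}F_{i,l}$ is a sum-irreducible representation of $A$ with $\sum_i k_i$ components, so the inequality $\ir'_R(A)\le\sum_i\ir'_R(B_i)$ will follow automatically once irredundancy is verified.

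The heart of the proof is checking that the combined representation is irredundant. I would assume, for contradiction, that some $F_{i_0,l_0}$ can be removed without changing the sum, and set $B'_{i_0}:=\sum_{l\ne l_0}F_{i_0,l}$. The irredundancy of the representation of $B_{i_0}$ forces $B'_{i_0}\subsetneq B_{i_0}$, while a sum of $\mathfrak{p}_{i_0}$-secondaries is still $\mathfrak{p}_{i_0}$-secondary, so $A=B'_{i_0}+\sum_{j\ne i_0}B_j$ is a secondary representation with pairwise distinct attached primes. No term can be dropped, since removing one would erase its attached prime from the fixed set $\Att_R(A)$; hence this is a minimal secondary representation, and therefore $B'_{i_0}\in\Lambda'_{\mathfrak{p}_{i_0}}(A)$. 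If $\mathfrak{p}_{i_0}$ is a minimal attached prime, then $\Lambda'_{\mathfrak{p}_{i_0}}(A)$ is a singleton, forcing $B'_{i_0}=B_{i_0}$ and contradicting $B'_{i_0}\subsetneq B_{i_0}$. If $\mathfrak{p}_{i_0}$ is embedded, the hypothesis that $B_{i_0}$ is $\mathfrak{p}_{i_0}$-minimal in $\Lambda'_{\mathfrak{p}_{i_0}}(A)$ again contradicts $B'_{i_0}\subsetneq B_{i_0}$. Either way we get a contradiction, so the representation $A=\sum_{i,l}F_{i,l}$ is irredundant.

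Once irredundancy is established, Macdonald's invariance gives $\ir'_R(A)=\sum_i k_i=\sum_i\ir'_R(B_i)$, which is the asserted equality, sidestepping the need to prove the opposite inequality separately. The main obstacle in the plan is precisely the irredundancy verification: it is the only step that genuinely exploits the minimal-embedded-component hypothesis on the $B_i$, and it is the place where one needs to be careful that the newly constructed secondary representation of $A$ is not only secondary but actually minimal, so that one may invoke either uniqueness (for minimal attached primes) or the minimality of $B_{i_0}$ in $\Lambda'_{\mathfrak{p}_{i_0}}(A)$ (for embedded ones).
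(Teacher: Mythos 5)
Your argument is essentially the paper's: form the concatenated sum-irreducible representation, assume a term is redundant, show that replacing $B_{i_0}$ by $B'_{i_0}=\sum_{l\neq l_0}F_{i_0,l}$ still gives a \emph{minimal} secondary representation, and then contradict either the uniqueness of $\Lambda'_{\fkp_{i_0}}(A)$ (for $\fkp_{i_0}$ minimal) or the minimal-embedded-component hypothesis (for $\fkp_{i_0}$ embedded). One small point to tighten: before asserting that $A=B'_{i_0}+\sum_{j\ne i_0}B_j$ is a secondary representation, you should observe that $B'_{i_0}\neq 0$; this follows because if $k_{i_0}=1$ then removing $F_{i_0,1}=B_{i_0}$ would contradict the minimality of $A=B_1+\ldots+B_n$ outright (the paper notes exactly this). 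Your route to showing each $F_{i,l}$ is $\fkp_i$-secondary (via $\Att_R(B_i)=\{\fkp_i\}$ and grouping into a minimal secondary representation) is a valid, slightly different packaging of what the paper proves directly via annihilators inside the contradiction argument, and is perhaps cleaner for stating it upfront.
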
 

  \begin{proof} For  $i\in\{1, \ldots , n\},$ set  $t_{i}=\ir'_{R}(B_{i}) $.  Let $B_{i} = B_{i1}+ \ldots + B_{it_{i}}$ be an irredundant sum-irreducible representation of $B_{i}$. Then 
$$A=\sum_{i=1}^n  (B_{i1}+ \ldots + B_{it_{i}})$$
is a sum-irreducible representation of  $A$. Suppose in contrary that this  representation is redundant. Without loss of any generality, we can assume that $B_{11}$  is redundant.  So, we have 
$$A= (B_{12}+ \ldots +B_{1t_1})+\sum_{i=2}^n  (B_{i1}+ \ldots + B_{it_{i}}).$$
Set $B'_1=B_{12}+ \ldots +B_{1t_1}.$ We claim that $B'_1$ is $\frak p_1$-secondary.  In fact, we note that $B'_1\neq 0$, since the secondary representation $A=B_1+\ldots+ B_n$ is minimal.  For each $j=2, \ldots , t_1,$ since $B_{1j}$ is  sum-irreducible, it is secondary. Set $\frak q_j=\Rad(\Ann_RB_{1j}).$ Then $\frak q_j^tB_{1j}=0$ for some positive integer $t.$  Moreover, $\frak q_j\supseteq \Rad (\Ann_RB_1)=\frak p_1.$ If $\frak q_j\neq \frak p_1$, then $$B_1=\frak q_j^tB_1=\frak q_j^t(B_{11}+\ldots +B_{1t})\subseteq B_{11}+\ldots + B_{1(j-1)}+B_{1(j+1)}+\ldots +B_{1t_1}.$$  This is impossible.   Therefore $\frak q_j=\frak p_1$ for all $j.$ Hence $B'_1$ is $\frak p_1$-secondary, and the claim is proved. By the claim,  $A=B'_1+B_2+\ldots +B_n$ is a minimal secondary representation of $A$. Hence $B_1$ is not a $\mathfrak{p}_1$-minimal embedded component of $A$. This gives a contradiction. Hence, the representation $A=\sum_{i=1}^n  (B_{i1}+ \ldots + B_{it_{i}})$ is irredundant. It means that
 	$$\ir'_{R}(A) = \ir'_{R}(B_{1}) + \ldots + \ir'_{R}(B_{n}).$$	
 \end{proof}

From now on, assume that $(R,\frak m)$ is a Noetherian local ring, $k$ denotes the residue field $R/\frak m$. Note that $A$  has a natural structure as an Artinian $\widehat R$-module. With the $\widehat R$-module structure,   a subset  of $A$ is an $R$-submodule if and only if it is an $\widehat R$-submodule of $A$, see \cite[8.2.4, 10.2.18]{BS}.  Therefore, each irredundant sum-irreducible representation of $R$-submodule $A$ is  an irredundant  sum-irreducible representation  of $\widehat R$-submodule $A$.  It follows that the sum-reducibility  index of Artinian modules  is preserved under $\frak m$-adic completion. Note that the reducibility  index of finitely generated modules is not necessarily preserved under $\frak m$-adic completion, see Example \ref{E:2}. 

  \begin{lemma} \label{L:3d} $\ir'_R(A)=\ir'_{\widehat R}(A).$
 \end{lemma}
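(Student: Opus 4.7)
The plan is to observe that the proof is essentially already contained in the paragraph preceding the lemma, and to make this precise. The crucial input is the fact cited from \cite[8.2.4, 10.2.18]{BS}: for an Artinian $R$-module $A$, the natural $\widehat R$-module structure has the property that a subset $B\subseteq A$ is an $R$-submodule if and only if it is an $\widehat R$-submodule. In particular, the lattices of submodules of $A$ over $R$ and over $\widehat R$ coincide.

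The key point I would then emphasize is that sum-irreducibility is a purely lattice-theoretic notion: it only refers to the collection of submodules and the sum operation, both of which agree under the two ring structures. Hence a submodule $B\subseteq A$ is sum-irreducible as an $R$-module if and only if it is sum-irreducible as an $\widehat R$-module. Likewise, given submodules $A_{1},\ldots,A_{n}$ of $A$, the equality $A=A_{1}+\cdots+A_{n}$ holds in the $R$-module sense if and only if it holds in the $\widehat R$-module sense, and irredundancy of such a representation is again lattice-theoretic.

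Consequently, every irredundant sum-irreducible representation of $A$ as an $R$-module is an irredundant sum-irreducible representation of $A$ as an $\widehat R$-module, and conversely. Taking the (well-defined) number of components in any such representation, one gets $\ir'_{R}(A)=\ir'_{\widehat R}(A)$. There is no real obstacle here beyond invoking the cited identification of submodule lattices; the proof is a direct transfer of the definition of $\ir'$ through this identification, and no further analysis of secondary representations or attached primes is needed.
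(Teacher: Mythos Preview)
Your proposal is correct and matches the paper's own justification: the paper does not give a separate proof of this lemma, but derives it in the preceding paragraph from the fact (cited from \cite[8.2.4, 10.2.18]{BS}) that the $R$-submodules and $\widehat R$-submodules of $A$ coincide, so that irredundant sum-irreducible representations are the same over both rings. Your emphasis that sum-irreducibility and irredundancy are lattice-theoretic notions makes this transfer explicit and is exactly the intended argument.
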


D. G.  Northcott  \cite{Nor} proved that   if  $\ell_R(M)<\infty$, then 
$\ir_R(M)=\dim_k\Soc (M).$ The following lemma gives an analogue to this result. 

\begin{lemma}  \label{L:3e} If $\ell_R(A)<\infty$, then $\ir'_R(A)=\dim_k(A/\frak mA).$
\end{lemma}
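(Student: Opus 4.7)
The plan is to reduce the statement to Northcott's identity $\ir_R(M)=\dim_k\Soc(M)$ (recalled at the beginning of Section~\ref{section2}) by passing to the Matlis dual. Since $\ell_R(A)<\infty$, the module $D(A)=\Hom_R(A,E_R(k))$ again has finite length, and Matlis duality restricts to a contravariant equivalence on the category of finite length $R$-modules. In particular, one obtains a lattice anti-isomorphism between submodules of $A$ and submodules of $D(A)$ via $B\mapsto D(A/B)$, which sends sums to intersections: $A=\sum_i B_i$ corresponds to $0=\bigcap_i D(A/B_i)$.

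The first step is to check that under this anti-isomorphism, sum-irreducible submodules $B\subseteq A$ correspond exactly to irreducible submodules $D(A/B)\subseteq D(A)$. This is a direct translation: the condition ``$B=B_1+B_2$ with $B_i\subseteq B$ forces $B=B_i$ for some $i$'' is the image under duality of ``$D(A/B)=D(A/B_1)\cap D(A/B_2)$ with $D(A/B)\subseteq D(A/B_i)$ forces $D(A/B)=D(A/B_i)$ for some $i$.'' Because being irredundant is also a purely lattice-theoretic property, irredundant sum-irreducible representations of $A$ correspond bijectively to irredundant irreducible decompositions of $0$ in $D(A)$, giving $\ir'_R(A)=\ir_R(D(A))$.

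The second step applies Northcott's identity to the finite length module $D(A)$, yielding $\ir_R(D(A))=\dim_k\Hom_R(k,D(A))$. Standard adjointness then gives
$$\Hom_R\bigl(k,D(A)\bigr)=\Hom_R\bigl(k,\Hom_R(A,E_R(k))\bigr)\cong \Hom_R\bigl(k\otimes_R A,\,E_R(k)\bigr)=D(A/\fkm A).$$
Since $A/\fkm A$ is a finite-dimensional $k$-vector space and Matlis duality restricts to ordinary $k$-linear duality on such vector spaces (because $\Hom_R(k,E_R(k))=k$), one has $\dim_k D(A/\fkm A)=\dim_k(A/\fkm A)$. Combining the two steps yields $\ir'_R(A)=\dim_k(A/\fkm A)$.

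The main obstacle will be Step~1: making rigorous the claim that the submodule-lattice anti-isomorphism interchanges the module-theoretic notions of sum-irreducibility and irreducibility, and that this bijection preserves irredundancy of decompositions. Everything else reduces to formal adjunction and an application of Northcott's theorem already cited in the paper.
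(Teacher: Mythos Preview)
Your argument is correct, but the paper takes a much more direct and elementary route. Instead of invoking Matlis duality, the paper simply exhibits an explicit irredundant sum-irreducible representation of $A$: choose $e_1,\dots,e_n\in A$ whose images form a $k$-basis of $A/\fkm A$ (so $n=\dim_k(A/\fkm A)$); since $\ell_R(A)<\infty$, Nakayama's lemma makes $\{e_1,\dots,e_n\}$ a minimal generating set of $A$. Then $A=Re_1+\cdots+Re_n$ is irredundant (minimality of the generators), and each cyclic module $Re_i$ is sum-irreducible because any decomposition $Re_i=C+D$ forces $e_i=x+y$ with $x\in C$, $y\in D$, and one of $x,y$ must already generate the cyclic module $Re_i$.

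Your approach instead proves what the paper records separately as Corollary~\ref{C:3g} (for finite-length modules) \emph{first}, via the lattice anti-isomorphism, and then deduces Lemma~\ref{L:3e} from Northcott's formula and adjunction. In the paper the logic runs the other way: Lemma~\ref{L:3e} is established by the bare-hands argument above, and Corollary~\ref{C:3g} is obtained afterwards by combining it with Lemma~\ref{L:3g}. Your route is more conceptual and makes the duality between $\ir$ and $\ir'$ transparent; the paper's route is self-contained and avoids appealing to properties of Matlis duality at this stage.
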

\begin{proof}  Set $\dim_k (A/\frak mA)=n.$ Let $\{\overline e_1, \ldots , \overline e_n\}$ be a basis of the $k$-vector space $A/\frak mA$, where $\overline{e_i}$ denotes the image of $e_i \in A$ into $A/\frak m A$ for $i=1, \ldots , n.$  Since $\ell_R(A)<\infty$,  we get that $\{e_1, \ldots , e_n\}$ is a minimal system of generators of $A$.  For  each $i\in\{1, \ldots ,n\},$ set $B_i=Re_i$. Then $B_i$ is not redundant in the representation $A=B_1+\ldots +B_n$.  For each $i$, suppose that  $B_i=C+D,$ where $C, D$ are submodules of $B_i.$ Then $e_i=x+y$ for some $x\in C$ and $y\in D.$ Hence  $\{e_i\}$ and $\{x, y\}$ are systems of generators of $B_i$.  Therefore $\{x, y\}$ is not a minimal system of generators of $B_i$, see  \cite[Theorem 2.3(i)]{Mat}. Hence $B_i=Rx$ or $B_i=Ry,$ i.e. $B_i=C$ or $B_i=D$.  Therefore $B_i$ is sum-irreducible.
\end{proof} 

 Let $E=E(R/\frak m)$ be the injective hull of $R/\frak m$. Denote by $D_R(-)=\Hom_R(-, E)$ the Matlis duality functor. Since $M$ is a finitely generated, $D_R(M)$ is an Artinian $R$-module. Therefore, it is natural to ask about a relationship between $\ir_R(M)$ and $\ir'_R(D_R(M)).$ Before giving the answer for the case where $\ell_R(M)<\infty$, let us note the following.

\begin{lemma}  \label{L:3g} 
$\dim_k\Soc (M)=\dim_k(D_R(M)/\frak m D_R(M)).$
\end{lemma}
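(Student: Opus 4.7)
The plan is to establish an $R$-module isomorphism $D_R\bigl(D_R(M)/\m D_R(M)\bigr) \cong \Soc(M)$ via the Hom-tensor adjunction and Matlis duality, and then pass to $k$-dimensions using the fact that $D_R$ restricted to $\m$-annihilated modules coincides with $k$-linear duality.

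First, I would apply the Hom-tensor adjunction $\Hom_R(X \otimes_R Y, Z) \cong \Hom_R(X, \Hom_R(Y, Z))$ with $X = k = R/\m$, $Y = D_R(M)$, and $Z = E$, to obtain
$$D_R\bigl(D_R(M)/\m D_R(M)\bigr) \;=\; \Hom_R\bigl(k \otimes_R D_R(M),\, E\bigr) \;\cong\; \Hom_R\bigl(k,\, D_R(D_R(M))\bigr).$$
Since $M$ is finitely generated over the Noetherian local ring $R$, Matlis duality gives $D_R(D_R(M)) \cong \widehat M$, so the right-hand side becomes $\Hom_R(k, \widehat M) = (0:_{\widehat M} \m) = \Soc(\widehat M)$.

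Next, I would identify $\Soc(\widehat M)$ with $\Soc(M)$ by flat base change. With $\m = (a_1, \ldots, a_s)$, the socle $\Soc(M)$ is the kernel of the $R$-linear map $M \to M^s$ sending $m$ to $(a_1 m, \ldots, a_s m)$. Flatness of $R \to \widehat R$ preserves this kernel, so $\Soc(M) \otimes_R \widehat R \cong \Soc(\widehat M)$. Because $\Soc(M)$ is a $k$-vector space and $\widehat R/\m \widehat R = k$, one has $\Soc(M) \otimes_R \widehat R \cong \Soc(M)$, and thus
$$D_R\bigl(D_R(M)/\m D_R(M)\bigr) \;\cong\; \Soc(M)$$
as $R$-modules.

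Finally, I would pass to $k$-dimensions. The quotient $V := D_R(M)/\m D_R(M)$ is annihilated by $\m$, hence a $k$-vector space; for any such $V$ every $R$-homomorphism $V \to E$ factors through $\Soc(E) = k$, so $D_R(V) \cong \Hom_k(V, k)$ is the $k$-linear dual of $V$. Since $\Soc(M)$ is finite-dimensional (as $M$ is finitely generated), the preceding isomorphism forces $V$ to be finite-dimensional of the same $k$-dimension as $\Soc(M)$, proving the lemma. The main subtlety lies in the flat-base-change identification $\Soc(M) \cong \Soc(\widehat M)$ and the reduction of $D_R$ to $k$-linear duality on $\m$-annihilated modules; both are standard but merit explicit verification.
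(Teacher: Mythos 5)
The paper records this lemma without proof, so there is no argument of the paper's to compare against; your proposal supplies a correct one. The chain of isomorphisms $D_R\bigl(D_R(M)/\m D_R(M)\bigr) \cong \Hom_R\bigl(k, D_R(D_R(M))\bigr) \cong \Hom_R(k,\widehat M) = (0:_{\widehat M}\m)$, obtained by Hom--tensor adjunction and Matlis biduality, is valid, and your identification of $(0:_{\widehat M}\m)$ with $\Soc(M)$ — using flatness of $R\to\widehat R$ to get $\Soc(M)\otimes_R\widehat R\cong(0:_{\widehat M}\m)$, then $\Soc(M)\otimes_R\widehat R\cong\Soc(M)$ because $\Soc(M)$ is a $k$-vector space and $k\otimes_R\widehat R\cong k$ — is also correct. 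The final step, that $D_R$ restricted to modules annihilated by $\m$ agrees with $k$-linear duality $\Hom_k(-,k)$, so a finite-dimensional dual forces the source to be finite-dimensional of the same dimension, legitimately closes the argument. One streamlining worth noting: you can avoid biduality and the passage through $\widehat M$ entirely by dualizing $\Soc(M)=\Hom_R(k,M)$ directly. Since $k$ is finitely presented and $E$ is injective, the natural map $k\otimes_R D_R(M)\to D_R\bigl(\Hom_R(k,M)\bigr)$ is an isomorphism, so $D_R(M)/\m D_R(M)\cong D_R(\Soc(M))\cong\Hom_k(\Soc(M),k)$ in one stroke, and the finite-dimensionality of $D_R(M)/\m D_R(M)$ comes for free from $D_R(M)$ being a finitely generated $\widehat R$-module. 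Either way the lemma holds; your route is sound, just a bit more roundabout.
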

The following corollary follows by Lemma \ref{L:3e} and Lemma \ref{L:3g}.
\begin{corollary}  \label{C:3g} If $\ell_R(M)<\infty$, then $\ir_R(M)=\ir'_R(D_R(M)).$
\end{corollary}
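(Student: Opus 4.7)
The plan is to chain the three length-finite formulas together; the corollary is essentially a transcription of the hypothesis through Matlis duality.

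First, I would observe that since $\ell_R(M)<\infty$, the Matlis dual $D_R(M)$ is an Artinian $R$-module of the same finite length (standard Matlis duality). Hence Lemma \ref{L:3e} applies to $A=D_R(M)$, giving
\[
\ir'_R(D_R(M))=\dim_k\bigl(D_R(M)/\fkm D_R(M)\bigr).
\]

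Next, I would invoke Lemma \ref{L:3g} to rewrite the right-hand side as $\dim_k\Soc(M)$. Finally, Northcott's classical identity recalled in Section~2, namely $\ir_R(M)=\dim_k\Soc(M)$ whenever $\ell_R(M)<\infty$, closes the circle and yields
\[
\ir_R(M)=\dim_k\Soc(M)=\dim_k\bigl(D_R(M)/\fkm D_R(M)\bigr)=\ir'_R(D_R(M)).
\]

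There is essentially no obstacle here since each equality is already recorded in the paper; the only point worth checking is that the hypothesis $\ell_R(M)<\infty$ is enough to license Lemma \ref{L:3e} on the dual side, which is immediate because Matlis duality preserves length over a local ring (indeed $D_R$ is an exact length-preserving functor on modules of finite length, as $D_R(R/\fkm)\cong R/\fkm$ and length is additive on short exact sequences). This makes the argument a three-line chain of identities.
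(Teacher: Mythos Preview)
Your proof is correct and follows exactly the route the paper takes: the corollary is stated to follow immediately from Lemma~\ref{L:3e} and Lemma~\ref{L:3g} (together with Northcott's identity $\ir_R(M)=\dim_k\Soc(M)$ for finite-length $M$). Your added remark that $D_R(M)$ has finite length is the only detail the paper leaves implicit, and you justify it correctly.
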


Our goal in this section is to compare $\ir_R(M)$ and $\ir'_R(D_R(M))$  in general case where $M$ is not necessary of finite length. We need the following lemmas.

Let $B$  be a submodule of $A$, then the canonical projection $p: A\rightarrow A/B$ induces the injection $D_R(A/B)\rightarrow D_R(A)$, where an element $f\in D_R(A/B)$ can be identified with the element $fp\in D_R(A).$  Thus, we consider $D_R(A/B)$ to be  a submodule of $D_R(A).$

\begin{lemma} \label{L:3h} Suppose that $B, C$  are submodules of $A$.  Then $D_R(A/B) \cap D_R(A/C) = 0$ if and only if $A=B+C$.
\end{lemma}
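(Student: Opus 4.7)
The plan is to reinterpret the intersection $D_R(A/B) \cap D_R(A/C)$ sitting inside $D_R(A)$ as the Matlis dual of $A/(B+C)$, and then invoke the faithfulness of $D_R$ on Artinian modules.

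First I would unwind the identification introduced just before the lemma: once embedded into $D_R(A)$ via $f \mapsto fp$, the submodule $D_R(A/B)$ consists precisely of those $R$-homomorphisms $f : A \to E$ that vanish on $B$, and similarly for $C$. Consequently an element $f \in D_R(A)$ lies in $D_R(A/B) \cap D_R(A/C)$ if and only if $f(B) = 0$ and $f(C) = 0$, equivalently if and only if $f(B+C) = 0$, i.e.\ $f$ factors through the canonical projection $q : A \to A/(B+C)$. This yields the key identification
$$D_R(A/B) \cap D_R(A/C) = D_R(A/(B+C))$$
as submodules of $D_R(A)$.

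Next I would reduce the problem to the faithfulness statement: for any submodule $N \subseteq A$ one has $D_R(A/N) = 0$ if and only if $A/N = 0$. Since $A$ is Artinian, so is its quotient $A/N$; any nonzero Artinian $R$-module contains a copy of the simple module $R/\fkm$, which extends along the injective envelope $E = E(R/\fkm)$ to a nonzero homomorphism $A/N \to E$, so $D_R(A/N) \neq 0$ whenever $A/N \neq 0$. The converse implication is immediate. Applying this with $N = B+C$ and combining with the identification above gives the equivalence.

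The argument is essentially formal, so there is no substantive obstacle. The only point needing a moment of care is to verify that the embedding $D_R(A/B) \hookrightarrow D_R(A)$ from the paragraph preceding the lemma is indeed ``homomorphisms of $A$ into $E$ that annihilate $B$''; once this is clear the lemma reduces to a one-line computation plus the standard fact that Matlis duality is faithful on Artinian $R$-modules.
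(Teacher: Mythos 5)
Your proof is correct and it sharpens the paper's argument rather than departing from it. The paper only records the containment $D_R(A/(B+C))\subseteq D_R(A/B)\cap D_R(A/C)$ (plus faithfulness) to get one direction of the equivalence, and for the converse it runs an element chase inside $D_R(A/(B\cap C))$: writing $a=b+c$ and comparing the two factorizations $f=f_1p_1=f_2p_2$ of $f$ through $A/B$ and $A/C$ to conclude $f=0$. Your key identification --- that inside $D_R(A)$ the submodule $D_R(A/B)$ is precisely the set of $f\colon A\to E(R/\fkm)$ vanishing on $B$, so that $D_R(A/B)\cap D_R(A/C)=D_R(A/(B+C))$ holds as an honest equality of submodules --- collapses both directions into a single application of faithfulness of $D_R$ on Artinian modules, which is more symmetric and avoids the detour through $A/(B\cap C)$. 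Your justification of faithfulness is also sound: a nonzero Artinian module has a minimal nonzero, hence simple, submodule by the descending chain condition, necessarily isomorphic to $R/\fkm$ since $R$ is local, and the inclusion $R/\fkm\hookrightarrow E(R/\fkm)$ extends to a nonzero homomorphism by injectivity of $E(R/\fkm)$.
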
 
	
\begin{proof}  Let $D_R(A/B)\cap D_R(A/C)=0.$ Since $D_R(A/(B+C))\subseteq D_R(A/B)\cap D_R(A/C)$, we have $D_R(A/(B+C))=0.$  Hence $A=B+C.$

Let $A=B+C$ and $f \in D_R(A/B) \cap D_R(A/C)$. Then we can write $f=f_1p_1=f_2p_2$ for some $f_1: A/B \rightarrow E(R/\frak m)$, $f_2: A/C  \rightarrow E(R/\frak m)$, where  the maps $p_1:A/(B\cap C) \rightarrow A/B$ and $p_2: A/(B\cap C) \rightarrow A/C$ are natural projections. Let $a \in A$. Since $A=B+C$, we can write  $a=b+c$ with $b \in B$ and $c \in C$. Then
	$$f(a+B \cap C)=f(b+c+ B \cap C) = f_1(c+B)=f_2(b+C).$$
	Since $f_1(c+B)=f(c+B \cap C)$ and $f_2(b+C)=f(b+B \cap C)$, we have 	$f(c+B \cap C) = f(b+B \cap C)$. Hence $f(b-c+B \cap C)=0$. So, $f_2(b+C)=0$.  Hence $f(a+B \cap C)=0$. Thus $f=0$.
	\end{proof}

For an Artinian $R$-module $A$, since $A$ has a natural structure as an Artinian $\widehat R$-module, $D_R(A)\cong D_{\widehat R}(A)$ which is a finitely generated $\widehat R$-module.

\begin{lemma} \label{L:3i}  The following statements are true.
\begin{itemize}
\item[\rm{(a)}]   The submodule $0$ is  irreducible in $\widehat M$ if and only if $D_R(M)$ is sum-irreducible.
\item[\rm{(b)}]   $A$ is  sum-irreducible  if and only if the submodule $0$ is  irreducible in $\widehat R$-module $D_R(A).$
\end{itemize}
\end{lemma}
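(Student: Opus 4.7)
The plan is to deduce both parts from Matlis duality, using that $D_R$ is exact and faithful on the relevant subcategories, together with the reflexivity $D_{\widehat R}(D_{\widehat R}(X)) \cong X$ for $X$ a finitely generated or Artinian $\widehat R$-module. I would prove (b) first and then obtain (a) as a corollary by setting $A := D_R(M)$, since then $D_{\widehat R}(A) \cong \widehat M$ (noting throughout that an $R$-submodule of $A$ coincides with an $\widehat R$-submodule, as already invoked in the paper).

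For the implication $(\Leftarrow)$ in (b), I would argue directly from Lemma \ref{L:3h}: assuming $0$ is irreducible in $D_R(A)$ and given $A = B+C$ for submodules $B, C \subseteq A$, Lemma \ref{L:3h} yields $D_R(A/B) \cap D_R(A/C) = 0$; irreducibility of $0$ then forces $D_R(A/B) = 0$ or $D_R(A/C) = 0$, and faithfulness of $D_R$ gives $B = A$ or $C = A$, proving sum-irreducibility of $A$.

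For $(\Rightarrow)$ I need the fact that every $\widehat R$-submodule $X$ of $D_R(A)$ arises as $D_R(A/B)$ for some submodule $B \subseteq A$. This I would derive from Matlis reflexivity: applying the exact functor $D_{\widehat R}$ to $0 \to X \to D_R(A) \to D_R(A)/X \to 0$ and using $D_{\widehat R}(D_R(A)) \cong A$ yields $0 \to D_{\widehat R}(D_R(A)/X) \to A \to D_{\widehat R}(X) \to 0$; setting $B := D_{\widehat R}(D_R(A)/X) \subseteq A$ gives $A/B \cong D_{\widehat R}(X)$, so $D_R(A/B) \cong D_{\widehat R}(D_{\widehat R}(X)) \cong X$. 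Then, given $A$ sum-irreducible and $X_1 \cap X_2 = 0$ in $D_R(A)$, write $X_i = D_R(A/B_i)$; Lemma \ref{L:3h} converts the hypothesis into $A = B_1 + B_2$, and sum-irreducibility gives $B_i = A$ for some $i$, whence $X_i = 0$. For (a), applying (b) to $A := D_R(M)$ and using $D_{\widehat R}(D_R(M)) \cong \widehat M$ produces the desired equivalence.

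The main obstacle is the identification in the $(\Rightarrow)$ direction that every $\widehat R$-submodule of $D_R(A)$ is the Matlis dual of a quotient of $A$; this step requires genuine use of Matlis reflexivity rather than a formal argument from Lemma \ref{L:3h} alone. After this identification is established, both statements are essentially the observation that Matlis duality reverses the lattice of submodules and interchanges sums with intersections.
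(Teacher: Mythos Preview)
Your proposal is correct and essentially the same approach as the paper's: both arguments combine Lemma~\ref{L:3h} with Matlis reflexivity to translate between submodules of the dual and quotients of the original module. The only difference is the order---the paper proves (a) directly and deduces (b) via $A\cong D_R(D_R(A))$, whereas you prove (b) directly and deduce (a) via $\widehat M\cong D_{\widehat R}(D_R(M))$---but the substance of the two arguments is identical.
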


\begin{proof}  (a) Suppose that  $0$ is  irreducible in $\widehat M$. Note that $D_R(D_R(M))\cong \widehat M$. Therefore, for any  proper submodules $B, C$ of $D_R(M)$, we can identify  $D_R(D_R(M)/B)$ and $D_R(D_R(M)/C)$ to be non-zero submodules of $\widehat M$. Since $0$ is  irreducible in $\widehat M$, we have $$D_R(D_R(M)/B) \cap D_R(D_R(M)/C)\neq 0.$$ Hence $D_R(M)\neq B+C$ by Lemma \ref{L:3h}.  Thus, $D_R(M)$ is sum-irreducible.

Conversely, assume that $D_R(M)$ is sum-irreducible. Let $L, P$ be non-zero submodules of $\widehat M$. Since $\widehat M\cong D_R(D_R(M))$ and $D_R(M)\cong D_R(D_R(D_R(M)))$,  it follows that $D_R(L)$ and $D_R(P))$ are quotients of $D_R(M)$.  Set $D_R(L)=D_R(M)/B$ and $D_R(P)=D_R(M)/C$, where $B$ and $C$ are submodules of $D_R(M)$. Because $L, P\neq 0$, we have $B, C\neq D_R(M).$ Since $D_R(M)$ is sum-irreducible, $D_R(M)\neq B+C.$ Hence $D_R(D_R(M)/B)\cap D_R(D_R(M)/C)\neq 0$ by Lemma \ref{L:3h}. It follows that $L\cap P\neq 0.$ Thus, the submodule $0$ is  irreducible in $\widehat M$.
 
(b) Suppose that $A$ is  sum-irreducible. Then $A$ is also  sum-irreducible as an $\widehat R$-module.  Since $A\cong D_R(D_R(A))$, it follows by (a) that $0$ is  irreducible in $\widehat R$-module $D_R(A).$ 

Conversely, suppose that $0$ is  irreducible in $\widehat R$-module $D_R(A).$ It follows by (a) that $D_R(D_R(A))$ is sum-irreducible. It means that $A$ is sum-irreducible.
\end{proof}

Now, we are ready to prove Theorem \ref{T:2}, which is the second main result of this paper.

\begin{proof}[Proof of Theorem \ref{T:2}]  Note that $D_R(M)\cong D_{\widehat R}(\widehat M)$ as Artinian $\widehat R$-modules. Therefore, by Theorem \ref{T:1} and Lemma \ref{L:3d}, it is enough to prove $\ir_R(M)=\ir'_{R}(D_R(M))$ under the assumption that $R=\widehat R.$  

Set $\ir_{R}(M)=t$. Let $0={\bigcap_{i=1}^{t}}N_i$ be an  irredundant irreducible decomposition of submodule $0$ of $M$. We prove $\ir_R(M)=\ir'_{R}(D_R(M))$ by induction on $t$. The case where $t=1$ follows by Lemma \ref{L:3i}(a). Let $t>1$ and assume that the result is true for $t-1.$
Set $N=\bigcap_{i=2}^tN_i.$ Consider the exact sequence
$$0\rightarrow M\overset{f}{\rightarrow} M/N_1\oplus M/N\overset{g}{\rightarrow} M/(N_1+ N)\rightarrow 0,$$
where $f(x)=(x+N_1, x+N)$ for all $x\in M$, and $g(x+N_1, y+N)=x-y+(N_1+N)$ for all $x+N_1\in M/N_1$, $y+N\in M/N.$ Therefore, we get the induced exact sequence
$$0\rightarrow D_R(M/(N_1+ N))\overset{g^*}{\rightarrow} D_R(M/N_1)\oplus D_R(M/N)\overset{f^*}{\rightarrow} D_R(M)\rightarrow 0.$$
Note that $g=q_1p_1+q_2p_2$, where
$p_1: M/N_1\oplus M/N\rightarrow M/N_1$ and $p_2: M/N_1\oplus M/N\rightarrow M/N$ are natural projections, $q_1: M/N_1\rightarrow M/(N_1+N)$ is defined by $q_1(x+N_1)=x+(N_1+N)$, and $q_2: M/N\rightarrow M/(N_1+N)$ is defined by $q_2(x+N)=-x+(N_1+N)$. Therefore, 
\begin{align}D_R(M)&\cong \big(D_R(M/N_1)\oplus D_R(M/N)\big)/{\rm Im} (g^*)\notag\\
&\cong \Big(D_R(M/N_1)/{\rm Im} (q_1^*)\Big)\oplus \Big(D_R(M/N)/{\rm Im} (q_2^*)\Big),\notag\end{align} where $q_1^*, q_2^*$ are respectively the maps induced by $q_1, q_2$ under Matlis dual functor. From the exact sequence 
$$0\rightarrow (N_1+N)/N_1\rightarrow M/N_1\overset{q_1}{\rightarrow}M/(N_1+N)\rightarrow 0,$$
we get the exact sequence
$$0\rightarrow D_R(M/(N_1+N))\overset{q_1^*}{\rightarrow} D_R(M/N_1)\rightarrow D_R((N_1+N)/N_1)\rightarrow0.$$
Note that $(N_1+N)/N_1\cong N$ and $N\neq 0$. Therefore,  $N_1$ is irreducible in $N_1+N$. So, from the above exact sequence we have by Lemma \ref{L:3i}(a) that
$$\ir'_R\big(D_R(M/N_1)/{\rm Im} (q_1^*)\big)=\ir'_R\big(D_R((N_1+N)/N_1)\big)=1.$$
From the exact sequence 
$$0\rightarrow (N_1+N)/N\rightarrow M/N\overset{q_2}{\rightarrow}M/(N_1+N)\rightarrow 0,$$
we get the exact sequence
$$0\rightarrow D_R(M/(N_1+N))\overset{q_2^*}{\rightarrow} D_R(M/N)\rightarrow D_R((N_1+N)/N)\rightarrow0.$$
 Set $N_i'=N_1\cap N_i$ for all $i=2, \ldots , t.$ Then we have
$$(N_1+N)/N\cong N_1\cong N_1/\overset{t}{\underset{i=2}{\cap}}N_i'.$$
Let $i\in\{2, \ldots , t\}.$ Note that $N_1/N_i'\cong (N_1+N_i)/N_i.$ Moreover,  $N_i$ is irreducible in $N_1+N_i.$ Therefore, we have
$\ir_R(N_1/N_i')=\ir_R((N_1+N_i)/N_i)=1.$ Hence, $N_i'$ is irreducible in $N_1$ for all $i=2, \ldots , t.$ It follows that $0=\overset{t}{\underset{i=2}{\cap}}N_i'$ is an irredundant irreducible decomposition of the submodule $0$ in $N_1.$ Hence $$\ir_R(N_1+N)/N=\ir_R(N_1)=t-1.$$ So, we get by induction that 
$$\ir'_R\big(D_R(M/N)/{\rm Im} (q_2^*)\big)=\ir'_R\big(D_R((N_1+N)/N)\big)=t-1.$$
Thus, we have $$\ir'_R\big(D_R(M))=\ir'_R\big(D_R(M/N_1)/{\rm Im} (q_1^*)\big)+\ir'_R\big(D_R(M/N)/{\rm Im} (q_2^*)\big)=t.$$
\end{proof}

Finally, the following example  clarifies the result in Theorem \ref{T:2} in case where $R$ is not complete under the $\frak m$-adic topology.

\begin{example} Consider the  Noetherian local domain $(R, \frak m)$ constructed by D. Ferrand  and M. Raynaud \cite{FR} such that $\widehat R$ has an embedded prime of dimension $1$. Then $\ir(R)=1$. We have $D_R(R)\cong E(R/\frak m),$ the injective hull of $R/\frak m$.  We get by Theorem \ref{T:2} and by Example \ref{E:2} that $$\ir'_R(D_R(R))=\ir'_R (E(R/\frak m))=\ir (\widehat R)>1.$$
\end{example}

\end{document}